\newtheorem{theorem}{\sc Theorem}[section]
\newtheorem{lem}[theorem]{\sc Lemma}
\newtheorem{cor}[theorem]{\sc Corollary}
\newtheorem*{thmA}{Theorem A}
\newtheorem*{thmB}{Theorem B}
\title[Bounded Engel elements]{Bounded Engel elements in residually finite groups}
\author[Bastos]{Raimundo Bastos}
\address{(Bastos) Departamento de Matem\'atica, Universidade de Bras\'ilia,
Brasilia-DF, 70910-900 Brazil}
\email{bastos@mat.unb.br}
\author[Silveira]{Danilo Silveira}
\address{(Silveira) Departamento de Matem\'atica, Universidade Federal de Goi\'as,
Catal\~ao-GO, 75704-020 Brazil}
\email{sancaodanilo@gmail.com}
\subjclass[2010]{20F45, 20E26}
\keywords{Engel elements, Residually finite groups}
\thanks{The first author was partially supported by FAPDF/Brazil.}
\begin{document}

\maketitle

\begin{abstract}
Let $q$ be a prime. Let $G$ be a residually finite group satisfying an identity. Suppose that for every $x \in G$ there exists a $q$-power $m=m(x)$ such that the element $x^m$ is a bounded Engel element. We prove that $G$ is locally virtually nilpotent. Further, let $d,n$ be positive integers and $w$ a non-commutator word. Assume that $G$ is a $d$-generator residually finite group in which all $w$-values are $n$-Engel. We show that the verbal subgroup $w(G)$ has $\{d,n,w\}$-bounded nilpotency class.
\end{abstract}

\maketitle

\section{Introduction}

Given a group $G$, an element $g\in G$ is called a (left) Engel element if for any $x\in G$ there exists a positive integer $n=n(x,g)$ such that $[x,_n g]=1$, where the commutator $[x,_n g]$ is defined inductively by the rules $$[x,_1 g]=[x,g]=x^{-1}g^{-1}xg\quad {\rm and,\, for}\; n\geq 2,\quad [x,_n g]=[[x,_{n-1} g],g].$$
If $n$ can be chosen independently of $x$, then $g$ is called a (left) $n$-Engel element, or more generally a bounded (left) Engel element. The group $G$ is an Engel group (resp. an $n$-Engel group) if all its elements are Engel (resp. $n$-Engel).

A celebrated result due to Zelmanov \cite{ze1,ze2,ze16} refers to the positive solution of the Restricted Burnside Problem (RBP for short): every residually finite group of bounded exponent is locally finite. The group $G$ is said to have a certain property locally if any finitely generated subgroup of $G$ possesses that property. An interesting result in this context, due to Wilson \cite{w}, states that every $n$-Engel residually finite group is locally nilpotent. Another result that was deduced following the positive solution of the RBP is that given positive integers $m,n$, if $G$ is a residually finite group in which for every $x \in G$ there exists a positive integer $q=q(x) \leqslant m$ such that $x^q$ is $n$-Engel, then $G$ is locally virtually nilpotent \cite{Ba}. We recall that a group possesses a certain property virtually if it has a subgroup of finite index with that property. For more details concerning Engel elements in residually finite groups see \cite{Ba,BMTT,BSTT,STT-var,STT}. 

One of the goals of the present article is to study residually finite groups in which some powers are bounded Engel elements. We establish the following result.

\begin{thmA}\label{main}
Let $q$ be a prime. Let $G$ be a residually finite group satisfying an identity. Suppose that for every $x \in G$ there exists a $q$-power $m=m(x)$ such that the element $x^m$ is a bounded Engel element. Then $G$ is locally virtually nilpotent.
\end{thmA}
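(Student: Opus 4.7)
The plan is to reduce to finitely generated $G$, apply Baer's theorem to show every finite quotient of $G$ is nilpotent, and then use the identity together with Zelmanov--Wilson machinery to extract a nilpotent subgroup of finite index.

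First, since the conclusion is a local property, I may assume $G$ is finitely generated and aim to prove that $G$ is virtually nilpotent. The initial structural step is to show that every finite quotient $\bar G$ of $G$ is nilpotent. Given $\bar x \in \bar G$, lifting to $x \in G$ and applying the hypothesis yields $k$ such that $x^{q^k}$ is a bounded (hence left) Engel element of $G$; the image $\bar x^{q^k}$ is then a left Engel element of the finite group $\bar G$, and Baer's theorem places $\bar x^{q^k} \in F(\bar G)$. Hence every element of $\bar G / F(\bar G)$ has $q$-power order, making this quotient a finite $q$-group. But a non-trivial finite $q$-group is nilpotent with non-trivial Fitting subgroup, contradicting $F(\bar G / F(\bar G)) = 1$. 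Thus $\bar G = F(\bar G)$ is nilpotent. Consequently, $G$ is residually (finite nilpotent), its profinite completion $\widehat G$ is pronilpotent, and $\widehat G = \prod_p \widehat G_p$ with each finitely generated pro-$p$ factor inheriting the identity of $G$.

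Second, to conclude virtual nilpotency I would deploy the Zelmanov--Wilson Lie-theoretic machinery in each pro-$p$ factor, in the style of the cited works \cite{Ba,BMTT,STT,STT-var}. The goal is to show each $\widehat G_p$ has a nilpotent open subgroup; pulling back through the map $G \to \widehat G$ and using residual finiteness then delivers a nilpotent subgroup of finite index in $G$. A natural route is to extract uniform bounds on both the $q$-power exponent and the Engel length on a suitable finite-index subgroup, reducing to the uniform-bound situation already handled in \cite{Ba}.

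The main obstacle is precisely this reduction to uniform bounds: our hypothesis provides the exponent and Engel length per element, not globally, and converting this into uniform data on a finite-index subgroup is where the identity must do its work. I expect this to rest on a compactness argument in the profinite topology (showing the closed sets $E_{n,k} = \{x : x^{q^k} \text{ is } n\text{-Engel}\}$ cover a finite-index piece and that Baire category yields one with non-empty interior), combined with the identity to control how $q$-powers and conjugates interact, so that the per-element bounds can be replaced by parameters $(n_0, k_0)$ valid on a normal subgroup of finite index.
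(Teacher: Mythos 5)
Your proposal has two genuine gaps, and the first is an outright error. You claim that every finite quotient $\bar G$ of $G$ is nilpotent, arguing that $\bar G/F(\bar G)$ is a finite $q$-group and that this contradicts ``$F(\bar G/F(\bar G))=1$''. But the Fitting subgroup does not have this property: $F(\bar G/F(\bar G))$ is in general nontrivial (e.g.\ $F(S_4)=V_4$ and $F(S_4/V_4)=F(S_3)=A_3\neq 1$), so Baer's theorem only yields that $\bar G$ is nilpotent-by-$q$-group. In fact the intermediate claim itself is false under the hypotheses of the theorem: take $G=S_3$ and $q=2$; this is residually finite, satisfies an identity, and for every $x$ the element $x^2$ lies in $A_3=F(S_3)$ and hence is a bounded Engel element, yet $S_3$ is not nilpotent (it is, of course, virtually nilpotent, which is all the theorem asserts). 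Consequently your reduction to a pronilpotent completion $\widehat G=\prod_p \widehat G_p$ is not available.

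The second gap is the uniformization step, which you acknowledge is only a hope, and the route you sketch does not work. The sets $E_{n,k}$ are determined by the hypothesis on elements of the abstract group $G$, which is countable and dense in $\widehat G$; Baire category is available in the compact group $\widehat G$, but the hypothesis is not assumed (and cannot be deduced) for elements of $\widehat G\setminus G$, while $G$ itself with the profinite topology is a countable union of closed points and hence not a Baire space. So no interior point, and no uniform $(n_0,k_0)$ on a finite-index subgroup, can be extracted this way; the reduction to the uniform-bound theorem of \cite{Ba} therefore does not go through. The paper's proof avoids uniform bounds altogether: it first shows every bounded Engel element lies in the Hirsch--Plotkin radical, by taking the subgroup $H$ generated by finitely many bounded Engel elements, passing to residually-(finite $p$) quotients via Lemma \ref{Wilson}, and using Lazard's lemma (Corollary \ref{lemma-lazard}) to get ad-nilpotency of each Lie commutator with its own individual bounds -- finiteness of the generating set, Lemma \ref{identity} and Zelmanov's Theorem \ref{1} then give nilpotency of $L_p(H)$, whence $\hat H$ is $p$-adic analytic, $H$ is linear, and Tits' alternative plus \cite[12.3.7]{Rob} make $H$ nilpotent. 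Finally, for a finitely generated $H\leq G$ the quotient $H/K$ by the subgroup $K$ generated by the bounded Engel elements in $H$ is a locally graded $q$-group satisfying an identity (here is where the $q$-power hypothesis enters), hence finite by Lemma \ref{lem.graded}, giving the virtually nilpotent conclusion. You would need an argument of this kind, working with per-element bounds, rather than the compactness/uniformization strategy.
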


A natural question arising in the context of the above theorem is whether the theorem remains valid with $m$ allowed to be an arbitrary natural number rather than $q$-power. This is related to the conjecture that if $G$ is a residually finite  periodic group satisfying an identity, then the group $G$ is locally finite (Zelmanov, \cite[p. 400]{zelm}). Note that the hypothesis that $G$ satisfies an identity is really needed. For instance, it is well known that there are residually finite $p$-groups that are not locally finite (Golod, \cite{G}). In particular, these groups cannot be locally virtually nilpotent. Similar examples have been obtained independently by Grigorchuk, Gupta-Sidki and Sushchansky and are published in \cite{Gri,GS,S}, respectively.   

Recall that a group-word $w=w(x_1,\dots,x_s)$ is a nontrivial element of the free group $F = F(x_1,\dots,x_s)$ on free generators $x_1,\dots,x_s$. A word is a commutator word if it belongs to the commutator subgroup $F'$. A non-commutator word $u$ is a group-word such that the sum of the exponents of some variable involved in it is non-zero. A group-word $w$ can be viewed as a function defined in any group $G$. The subgroup of $G$ generated by the $w$-values is called the verbal subgroup of $G$ corresponding to the word $w$. It is usually denoted by $w(G)$. However, if $k$ is a positive integer and $w = x_1^k$, it is customary to write $G^{k}$ rather than $w(G)$.

There is a well-known quantitative version of Wilson's theorem, that is, if $G$ is a $d$-generator residually finite $n$-Engel group, then $G$ has $\{d,n\}$-bounded nilpotency class. As usual, the expression ``$\{a,b,...\}$-bounded'' means ``bounded from above by some function which depends only on parameters $a,b,...$''. We establish the following related result.   

\begin{thmB}\label{main.2}
Let $d,n$ be positive integers and $w$ a non-commutator word. Assume that $G$ is a $d$-generator residually finite group in which all $w$-values are $n$-Engel. Then the verbal subgroup  $w(G)$ has $\{d,n,w\}$-bounded nilpotency class.
\end{thmB}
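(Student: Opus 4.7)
Since $w$ is a non-commutator word, there is a variable whose exponent-sum in $w$ is some nonzero integer $k=k(w)$; substituting $1$ for all other variables shows that every $k$-th power in $G$ is a $w$-value, and therefore $n$-Engel. In particular, $G^k\leq w(G)$ and $G$ satisfies the nontrivial identity $[y,{}_n x^k]=1$.

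I would proceed in three steps. First, I bound the index of $w(G)$ in $G$: since $G/G^k$ is a $d$-generator residually finite group of exponent dividing $k$, Zelmanov's positive solution of the Restricted Burnside Problem gives that $|G/G^k|$ is bounded by a function of $d$ and $k$. Hence $[G:w(G)]$ is $\{d,w\}$-bounded, and a Reidemeister-Schreier argument shows that $w(G)$ itself is generated by some $r=r(d,w)$ elements. Second, I establish that $w(G)$ is locally nilpotent: the set of $w$-values is closed under conjugation in $G$ and consists of $n$-Engel elements, so by the theorem (Wilson--Shumyatsky) that in a residually finite group the subgroup generated by a normal set of $n$-Engel elements is locally nilpotent, $w(G)$ is locally nilpotent. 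Combined with the finite generation from the first step, $w(G)$ is nilpotent.

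The main step is to bound the nilpotency class of $w(G)$ by some $c(d,n,w)$. Because $w(G)$ is residually finite, it suffices to bound the class uniformly across all its finite quotients. In each finite quotient the images of $w$-values remain $n$-Engel, so by Baer's theorem they lie in the Fitting subgroup; in the associated graded Lie ring they give rise to ad-nilpotent elements of index at most $n$, and the identity $[y,{}_n x^k]=1$ translates into a polynomial identity for the Lie ring. Applying Zelmanov's theorem that a finitely generated PI Lie algebra in which a generating set consists of ad-nilpotent elements is nilpotent, in its quantitative form, then yields a class bound depending only on $d$, $n$, and $w$, which lifts back to $w(G)$.

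The principal obstacle is this final step: Zelmanov's deep Lie-theoretic results must be invoked in a uniform, quantitative form, and one has to verify carefully that the parameters appearing in his bound depend only on $d$, $n$ and $w$, not on the particular finite quotient. The non-commutator hypothesis on $w$ is indispensable throughout, as it is what forces $w(G)$ to have bounded index in $G$, what makes $w(G)$ finitely generated, and what supplies the polynomial identity powering the Lie-algebra argument.
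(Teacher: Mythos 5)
Your first two steps do match the paper: the exponent-sum trick shows every $k$-th power is a $w$-value and hence $n$-Engel, the positive solution of the Restricted Burnside Problem bounds $|G/G^k|$ and so $[G:w(G)]$, and $w(G)$ is finitely generated and (by the non-quantitative result on $w$-values that are Engel) nilpotent. The genuine gap is in your final, quantitative step, which is the whole content of the theorem. To run any of the Lie-theoretic machinery uniformly in a finite quotient you need a generating set of \emph{bounded size} consisting of elements known to be $n$-Engel (or at least $w$-values): your $r(d,w)$ Reidemeister--Schreier generators of $w(G)$ are not $w$-values and are not known to be Engel, while the set of $w$-values, though normal and Engel, has unbounded size. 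The missing ingredient is the Nikolov--Segal verbal width theorem, which the paper invokes at exactly this point: in each finite quotient $G/N_i$ the subgroup $w(G/N_i)$ is generated by a $\{d,w\}$-bounded number of $w$-values. Without such a width result your ``uniform'' application of Zelmanov has no Lie algebra with boundedly many suitable generators to apply to. Moreover, Zelmanov's theorem requires \emph{every commutator in the generators} to be ad-nilpotent of bounded index, and a commutator of $w$-values need not be a $w$-value, hence is not known to be $n$-Engel; your sketch does not address this either.

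Note also that one cannot shortcut the problem by bounding the class of $G^k$ (which the paper does, using Nikolov--Segal together with a lemma of Shumyatsky--Tortora--Tota) and then passing to $w(G)$ via bounded index: a group with a bounded-class subgroup of bounded index can have arbitrarily large class (dihedral $2$-groups contain an abelian subgroup of index $2$). What the bounded index and the bounded class of $G^k$ do give is a $\{d,n,w\}$-bounded \emph{derived length} for $w(G)$, and that is how the paper closes the argument: in each finite quotient, $w(G/N_i)$ is generated by boundedly many $n$-Engel $w$-values (Nikolov--Segal), and a quantitative form of Gruenberg's theorem (a soluble group of derived length $\ell$ generated by $m$ elements that are $n$-Engel is nilpotent of $\{\ell,m,n\}$-bounded class) then bounds the class of every finite quotient, hence of $w(G)$. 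Your Lie-algebra alternative could conceivably replace this last Gruenberg step, but only after supplying the verbal width input and handling the ad-nilpotency of commutators of the generators, neither of which appears in your proposal.
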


A non-quantitative version of the above theorem already exists in the literature. It was obtained in \cite[Theorem C]{BSTT}.  

The paper is organized as follows. In the next section we describe some important ingredients of what are often called ``Lie methods in group theory''. Theorems A and B are proved in Sections 3 and 4, respectively. The proofs of the main results rely of  Zelmanov's techniques that led to the solution of the RBP \cite{ze1,ze2,ze16}, Lazard's criterion for a pro-$p$ group to be $p$-adic analytic \cite{la}, and a result of Nikolov and Segal \cite{NS} on verbal width in groups. 

\section{Associated Lie algebras}

Let $L$ be a Lie algebra over a field $\mathbb{K}$. We use the left normed notation: thus if $l_1,l_2,\dots,l_n$ are elements of $L$, then $$[l_1,l_2,\dots,l_n]=[\dots[[l_1,l_2],l_3],\dots,l_n].$$ We recall that an element $a\in L$ is called {\it ad-nilpotent} if there exists a positive integer $n$ such that $[x,{}_na]=0$ for all $x\in L$. When $n$ is the least integer with the above property then we say that $a$ is ad-nilpotent of index $n$. 

Let $X\subseteq L$ be any subset of $L$. By a commutator of elements in $X$, we mean any element of $L$ that can be obtained from
elements of $X$ by means of repeated operation of commutation with an arbitrary system of brackets
including the elements of $X$. Denote by $F$ the free Lie algebra over $\mathbb{K}$ on countably many free
generators $x_1,x_2,\dots$. Let $f=f(x_1,x_2,\dots,x_n)$ be a non-zero element of $F$. The algebra $L$ is said to satisfy the identity $f\equiv 0$ if
$f(l_1,l_2,\dots,l_n)=0$ for any $l_1,l_2,\dots,l_n\in L$. In this case we say that $L$ is PI. Now, we recall an important theorem of Zelmanov 
\cite[Theorem 3]{zelm} that has many applications in group theory.
\begin {theorem}\label{1} 
Let $L$ be a Lie algebra over a field generated by a finite set. Assume that $L$ is PI and that each commutator in the generators is ad-nilpotent. Then $L$ is nilpotent.
\end{theorem}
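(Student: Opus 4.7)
The statement is one of Zelmanov's deep theorems underpinning the solution of the Restricted Burnside Problem, and the authors invoke it as a black box. A genuinely self-contained proof fills major portions of \cite{ze1,ze2,ze16}; I can only sketch the architecture I would try to follow.

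The attack decomposes into three main ingredients. The first is a reduction via a Shirshov-type height theorem for PI Lie algebras: in a finitely generated Lie algebra satisfying a polynomial identity, every element is a linear combination of bounded-length left-normed products in a fixed finite set of ``fundamental'' commutators in the generators. This reduces global nilpotency to a uniform vanishing statement for these fundamental commutators, all of which are ad-nilpotent by hypothesis. The second ingredient is the Kostrikin--Zelmanov theory of sandwich elements, i.e.\ elements $c$ satisfying $(\mathrm{ad}\,c)^2=0$ together with a secondary vanishing identity; the key fact is that a Lie algebra generated by finitely many sandwiches is nilpotent. The third is a construction that manufactures sandwich elements out of ad-nilpotent ones, by linearizing identities of the form $(\mathrm{ad}\,a)^n=0$ and using the PI to force the resulting multilinear expressions to collapse to a sandwich.

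The global plan is then as follows. Assume toward a contradiction that $L$ is not nilpotent, and pass to a quotient of minimal ``complexity'' in a suitable well-founded sense (say, minimal with respect to the number of generators and the degree of the PI). Use the height theorem to reduce control to bounded-length commutators in the generators. From the ad-nilpotent fundamental commutators, manufacture a nonzero sandwich element in this minimal quotient. The ideal it generates is then locally nilpotent by Kostrikin--Zelmanov, and factoring it out strictly decreases the complexity invariant, contradicting minimality.

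The principal obstacle is the third step in positive characteristic, where Kostrikin's classical linearization argument, which produces sandwiches directly from $(\mathrm{ad}\,a)^n=0$ in characteristic zero, breaks down. Zelmanov's innovation is to pass to the associated graded Lie superalgebra and exploit divided powers together with the $p$-operation to extract sandwich elements from sufficiently ad-nilpotent generators; the PI identity is used repeatedly to guarantee that these constructions do not vanish trivially. This characteristic-$p$ manufacture of sandwiches is the technical heart of the theorem, and is where I would expect essentially all of the difficulty to lie; the height theorem and the sandwich-nilpotence theorem are then used as essentially formal wrappers around it.
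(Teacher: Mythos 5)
There is nothing in the paper to compare against: Theorem~\ref{1} is not proved in this article at all, but quoted verbatim from Zelmanov (\cite[Theorem 3]{zelm}) and used as a black box, exactly as you anticipated. So the only question is whether your text stands on its own as a proof, and it does not --- by your own admission it is an outline of Zelmanov's strategy rather than an argument. The outline is broadly faithful to the published proof (Kostrikin--Zelmanov sandwich theory, the construction of sandwiches from ad-nilpotent elements, the passage to superalgebras and divided powers in characteristic $p$), but every load-bearing step is invoked rather than established: the sandwich-algebra nilpotency theorem, the height-type reduction for PI Lie algebras, and above all the production of a \emph{nonzero} sandwich from ad-nilpotent commutators under a PI in positive characteristic, which you correctly identify as the technical heart and then leave entirely unexecuted.

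Two more concrete points. First, your minimal-counterexample scheme is underspecified: you never say what the well-founded ``complexity'' invariant is, nor why factoring out the locally nilpotent ideal generated by a sandwich strictly decreases it --- the number of generators and the degree of the identity do not change under such a quotient. The standard scheme avoids this: the hypotheses (PI, ad-nilpotent commutators in the generators) pass to quotients, so one factors by the locally nilpotent radical, notes that a nonzero sandwich in the quotient would generate a nonzero locally nilpotent ideal there (contradicting triviality of the radical), and concludes that $L$ equals its locally nilpotent radical; since $L$ is finitely generated, local nilpotency is nilpotency. Second, the Shirshov-type height theorem you posit for PI Lie algebras is not a theorem you can cite off the shelf in the form you state; Zelmanov's reductions are of a different and more delicate nature. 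In the context of this paper the correct ``proof'' is simply the citation to \cite{zelm,ze1,ze2,ze16}; as an independent proof attempt, yours has genuine gaps at every stage where the actual difficulty lives.
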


\subsection{On Lie Algebras Associated with Groups}

Let $G$ be a group and $p$ a prime. Let us denote by $D_i=D_i(G)$ 
the $i$-th dimension subgroup of $G$ in characteristic
$p$. These subgroups form a central series of $G$
known as the {\it Zassenhaus-Jennings-Lazard series} (see \cite[p. 250]{Huppert2} for more details). Set $L(G)=\bigoplus D_i/D_{i+1}$. 
Then $L(G)$ can naturally be viewed as a Lie algebra 
over the field ${\mathbb F}_p$ with $p$ elements. 

The subalgebra of $L(G)$ generated by $D_1/D_2$ will be denoted by $L_p(G)$. The nilpotency of $L_p(G)$ has strong influence in the structure of a finitely generated group $G$. According to Lazard \cite{l2} the nilpotency of $L_p(G)$ is equivalent to $G$ being $p$-adic analytic (for details see  \cite[A.1 in Appendice and  Sections 3.1 and 3.4 in Ch.\ III]{l2} or  \cite[1.(k) and 1.(o) in Interlude A]{GA}).

\begin{theorem}\label{3} 
Let $G$ be a finitely generated pro-$p$ group. If $L_p(G)$ is nilpotent, then $G$ is $p$-adic analytic.
\end{theorem}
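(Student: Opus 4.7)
The plan is to deduce $p$-adic analyticity from nilpotency of $L_p(G)$ by producing an open uniformly powerful pro-$p$ subgroup of $G$ and then appealing to the Hausdorff series to equip it with a $\mathbb{Z}_p$-Lie algebra structure. Throughout, I write $D_i$ for the terms of the Zassenhaus-Jennings-Lazard series of $G$.

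First, I would exploit the interplay between the filtration $\{D_i\}$ and the graded restricted Lie algebra $L_p(G)$. Since $G$ is finitely generated as a pro-$p$ group, the quotient $D_1/D_2 = G/\Phi(G)$ is a finite $\mathbb{F}_p$-vector space, and consequently $L_p(G)$ is finitely generated as a restricted Lie algebra. The nilpotency of $L_p(G)$ of some class $c$ then translates into the group-theoretic statement that every iterated commutator $[g_1, g_2, \dots, g_{c+1}]$ of elements of $G$ lies deep in the filtration, modulo the divergences coming from the $p$-power operation. This is the step that converts Lie nilpotency into a structural property of the group, and it rests on the fact that $\{D_i\}$ is the fastest decreasing central $p$-filtration, so that the Jennings-Lazard formula $D_{i+1} = [D_i,G]\,\prod_{jp\ge i+1} D_j^p$ gives quantitative control on the congruences satisfied in each layer.

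Second, using this control, I would locate an integer $m$ large enough so that $U := D_m$ is a powerful finitely generated pro-$p$ group in the sense of Lubotzky-Mann, i.e.\ $[U, U] \subseteq U^p$ for odd $p$ (and $[U,U]\subseteq U^4$ for $p=2$), and moreover that $U$ is torsion-free; equivalently, $U$ is uniform. The existence of such $m$ is a technical but standard consequence of the grading estimates supplied by the nilpotency of $L_p(G)$: once commutators of length $c{+}1$ vanish in the graded algebra, the corresponding commutator relations in $G$ can be absorbed into $p$-th powers after passing far enough down the series.

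The main obstacle is the final step: showing that a uniform pro-$p$ group $U$ is $p$-adic analytic. This is the heart of Lazard's theorem and is carried out by defining, on the underlying set of $U$, the structure of a $\mathbb{Z}_p$-Lie algebra via the inverse Baker-Campbell-Hausdorff formula, verifying convergence on $U$ using powerfulness, and using this to realize $U$ as an analytic manifold over $\mathbb{Q}_p$ on which multiplication and inversion are given by convergent Hausdorff series and are therefore analytic. Once $U = D_m$ is shown to be $p$-adic analytic, the whole group $G$ inherits the analytic structure from its finite decomposition as a disjoint union of cosets of $U$, completing the argument.
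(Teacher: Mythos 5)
Your proposal is not matched against an in-paper argument, because the paper does not prove this statement at all: Theorem~\ref{3} is quoted as Lazard's criterion, with the proof delegated to \cite[A.1 in Appendice; Ch.~III, 3.1 and 3.4]{l2} and \cite[Interlude A]{GA}. What you have written is therefore an attempt to reprove Lazard's theorem itself, and while the overall shape you describe (graded Lie algebra $\to$ powerful/uniform open subgroup $\to$ Hausdorff series and analytic structure) is indeed the shape of the Lazard--Lubotzky--Mann theory, the decisive step is asserted rather than proved, and it is exactly where the content of the theorem sits.

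The gap is your second step, the claim that nilpotency of $L_p(G)$ lets you ``locate $m$'' with $U=D_m$ powerful and torsion-free, dismissed as ``a technical but standard consequence.'' Two problems. First, nilpotency of $L_p(G)$ only annihilates Lie products of $c+1$ elements of the bottom layer $D_1/D_2$; the commutator structure of $D_m$ for large $m$ is governed by the higher homogeneous components of $L(G)$, which in general lie outside $L_p(G)$. To control them one must use that $L(G)$ is generated by $D_1/D_2$ as a \emph{restricted} Lie algebra together with $(\operatorname{ad}\tilde x)^p=\operatorname{ad}(\widetilde{x^p})$ (compare Lemma~\ref{lazard-ad}), deducing for instance that the dimensions $\dim D_i/D_{i+1}$ are uniformly bounded; nothing of this appears in your outline, and without it the phrase ``commutators of length $c+1$ vanish in the graded algebra'' is simply not what nilpotency of $L_p(G)$ gives you. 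Second, even granting full control of the graded algebra, passing from congruences modulo the filtration to the honest group-theoretic facts that some open $D_m$ is powerful \emph{and torsion-free} is precisely the hard part of the analytic characterization (bounded layers $\Rightarrow$ finite rank $\Rightarrow$ existence of an open uniform subgroup, as in \cite[Interlude A]{GA}); inside a proof of that very theorem it cannot be waved through as routine, and as stated it is circular in spirit. Your third step (uniform $\Rightarrow$ analytic via the inverse Baker--Campbell--Hausdorff formula) is a correct description of the standard argument, but if you intend to import it and the finite-rank machinery from \cite{l2} or \cite{GA}, then you are in effect doing what the paper does, namely citing Lazard, and the middle reduction still needs to be either proved or explicitly attributed.
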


Let $x\in G$ and let $i=i(x)$ be the largest positive integer such that $x\in D_i$ (here, $ D_i $ is a term of the $ p $-dimensional central series to $ G $). We denote by $\tilde{x}$ the element $xD_{i+1}\in L(G)$.  We now quote two results providing sufficient conditions for $\tilde{x}$ to be ad-nilpotent. The first lemma was established in \cite[p. 131]{la}.

\begin{lem}\label{lazard-ad} 
For any $x\in G$ we have $(ad\,{\tilde x})^p=ad\,(\widetilde {x^p})$. Consequently, if $x$ is of finite order $t$ then $\tilde{x}$ is ad-nilpotent of index  at most  $t$.  
\end{lem}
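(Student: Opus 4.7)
The plan is to realize $L(G)$ as a Lie subalgebra of the associated graded of the modular group algebra $R=\mathbb{F}_p[G]$ and then derive the identity from a Frobenius-type computation. Write $\Delta$ for the augmentation ideal of $R$; by Jennings' description of the dimension subgroups one has $D_i=\{g\in G : g-1\in\Delta^i\}$, and the assignment $xD_{i+1}\mapsto (x-1)+\Delta^{i+1}$ for $x\in D_i$ extends to a Lie algebra embedding $L(G)\hookrightarrow\operatorname{gr}(R)=\bigoplus_n\Delta^n/\Delta^{n+1}$, where the target carries the commutator bracket $[a,b]=ab-ba$ inherited from the associative multiplication. This identification will let me transfer the problem into the graded ring.

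Next I will invoke the Jacobson formula in characteristic $p$: in any associative $\mathbb{F}_p$-algebra the operator $\operatorname{ad} u = L_u - R_u$ is a difference of commuting left and right multiplications, so the Frobenius on commuting operators gives
\[
(\operatorname{ad} u)^p = L_u^p - R_u^p = L_{u^p} - R_{u^p} = \operatorname{ad}(u^p).
\]
Applying this in $\operatorname{gr}(R)$ with $u=(x-1)+\Delta^{i+1}$ yields $(\operatorname{ad}\tilde x)^p=\operatorname{ad}(u^p)$. Since $x$ and $1$ commute in $R$, the binomial expansion together with the vanishing modulo $p$ of the middle binomial coefficients gives $(x-1)^p=x^p-1$; combined with the classical fact $x\in D_i\Rightarrow x^p\in D_{pi}$, this shows that $u^p$ is the image of $\widetilde{x^p}$ in $\Delta^{pi}/\Delta^{pi+1}$, establishing the first assertion.

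For the consequence I iterate to obtain $(\operatorname{ad}\tilde x)^{p^k}=\operatorname{ad}(\widetilde{x^{p^k}})$ for every $k\geq 0$. If $x$ has finite order $t$, write $t=p^a m$ with $\gcd(m,p)=1$ and set $y=x^m$, so that $y$ has order $p^a$. Additivity in $L(G)$ gives $\tilde y = m\tilde x$ in $D_i/D_{i+1}$, and because $m$ is invertible in $\mathbb{F}_p$ one has $\operatorname{ad}\tilde y = m\operatorname{ad}\tilde x$. Since $y^{p^a}=1$ has zero image in every graded component, the iterated identity forces $(\operatorname{ad}\tilde y)^{p^a}=0$, whence $(\operatorname{ad}\tilde x)^{p^a}=0$ and $\tilde x$ is ad-nilpotent of index at most $p^a\leq t$.

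The step requiring the most care is the Jennings--Quillen identification of $L(G)$ with a Lie subalgebra of $\operatorname{gr}(\mathbb{F}_p[G])$, together with the property $D_i^p\subseteq D_{pi}$ of the Zassenhaus--Jennings--Lazard series; both are classical but nontrivial facts standard in this area and used elsewhere in the paper. Once they are accepted, the proof reduces to the Frobenius identity for commuting left/right multiplication operators and the one-line binomial calculation $(x-1)^p=x^p-1$, both of which are routine.
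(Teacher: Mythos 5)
Your proof is correct. Note that the paper itself does not prove this lemma at all: it is quoted from Lazard (the citation \cite{la}, p.~131), so what you have done is supply the standard argument behind that citation rather than follow anything in the text. Your route --- embedding $L(G)$ into $\operatorname{gr}(\mathbb{F}_p[G])$ via $xD_{i+1}\mapsto (x-1)+\Delta^{i+1}$ (Jennings/Quillen), then applying $(L_u-R_u)^p=L_{u^p}-R_{u^p}$ for the commuting left/right multiplications and the identity $(x-1)^p=x^p-1$ --- is essentially Lazard's own argument, and it is the proof one would expect. Two small points are worth making explicit. First, you correctly read $\widetilde{x^p}$ as the image of $x^p$ in the degree-$pi$ component $D_{pi}/D_{pi+1}$ (which may be zero); under the literal ``largest index'' convention used to define $\tilde{x}$ the displayed identity would be imprecise when $x^p\in D_{pi+1}$, so your interpretation is the right one and is exactly what the iteration and the ad-nilpotency conclusion need. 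Second, in the reduction of a general order $t=p^am$ to the $p$-part, the equality $\tilde{y}=m\tilde{x}$ with $y=x^m$ deserves one more line: since $D_i/D_{i+1}$ is an $\mathbb{F}_p$-vector space and $p\nmid m$, the class $m\tilde{x}$ is nonzero, so $y\in D_i\setminus D_{i+1}$ and the largest-index convention for $\tilde{y}$ agrees with the image of $y$ in $D_i/D_{i+1}$. With that remark added, your argument is complete, and it even gives the sharper bound $p^a\le t$ on the ad-nilpotency index; it also covers the general finite order $t$, whereas many sources state the consequence only for elements of $p$-power order.
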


\begin{cor}\label{lemma-lazard}
Let $x$ be an element of a group $G$ for which there exists a positive integer $m$ such that $x^m$ is $ n $-Engel. Then $\tilde{x}$ is ad-nilpotent.
\end{cor}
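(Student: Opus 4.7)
The plan is to use Lemma \ref{lazard-ad} as a lifting device: if we can show that $\widetilde{x^{p^a}}$ is ad-nilpotent in $L(G)$ for some integer $a \geq 0$, the lemma transfers this to $\tilde x$. The argument therefore has two ingredients: (i) translating the hypothesis that $x^m$ is $n$-Engel into ad-nilpotency of $\widetilde{x^m}$ in $L(G)$, and (ii) passing from the arbitrary exponent $m$ to a $p$-power exponent, which is what Lemma \ref{lazard-ad} actually controls.

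For (i), let $i=i(x^m)$ and take an arbitrary homogeneous element $\tilde y\in D_j/D_{j+1}$ coming from $y\in D_j\setminus D_{j+1}$. By the very definition of the Lie bracket on the associated graded algebra, the iterated commutator $[\tilde y,_n\widetilde{x^m}]\in D_{j+ni}/D_{j+ni+1}$ is the class of the group commutator $[y,_n x^m]$, which equals $1$ by hypothesis; hence $[\tilde y,_n\widetilde{x^m}]=0$. Since $ad\,\widetilde{x^m}$ is $\mathbb{F}_p$-linear and $L(G)$ is spanned by such homogeneous elements, this yields $(ad\,\widetilde{x^m})^n=0$.

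For (ii), write $m=p^a m'$ with $\gcd(p,m')=1$, so that $x^m=(x^{p^a})^{m'}$. Each graded piece $D_j/D_{j+1}$ is an $\mathbb{F}_p$-vector space, so raising to the prime-to-$p$ power $m'$ acts on the graded piece carrying $\widetilde{x^{p^a}}$ as multiplication by the unit $m'\in\mathbb{F}_p$; hence $\widetilde{x^m}=m'\,\widetilde{x^{p^a}}$, and $ad\,\widetilde{x^{p^a}}$ and $ad\,\widetilde{x^m}$ differ only by this unit scalar. It follows that $\widetilde{x^{p^a}}$ is ad-nilpotent of index at most $n$. Iterating Lemma \ref{lazard-ad} gives $(ad\,\tilde x)^{p^a}=ad\,\widetilde{x^{p^a}}$, whence
\[
(ad\,\tilde x)^{p^a n}=\bigl((ad\,\tilde x)^{p^a}\bigr)^n=(ad\,\widetilde{x^{p^a}})^n=0,
\]
so $\tilde x$ is ad-nilpotent, of index at most $p^a n$.

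The main obstacle, and the key insight, lies in (ii): Lemma \ref{lazard-ad} only relates $\tilde x$ to $\widetilde{x^{p^k}}$ for $p$-power exponents, whereas the hypothesis gives information about $\widetilde{x^m}$ for an arbitrary $m$. The realization that the prime-to-$p$ part $m'$ of $m$ is invisible at the level of the graded Lie algebra over $\mathbb{F}_p$, since it appears as an invertible scalar, is precisely what closes this gap. Step (i) is essentially bookkeeping, once the standard compatibility between group commutators and Lie brackets in the associated graded algebra is in hand.
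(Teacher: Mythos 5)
Your proof is correct and is essentially the argument the paper leaves implicit by stating the corollary right after Lemma \ref{lazard-ad}: transfer the $n$-Engel condition on $x^m$ to $(ad\,\widetilde{x^m})^n=0$ via the compatibility of group commutators with the graded bracket, absorb the prime-to-$p$ part $m'$ of $m$ as an invertible scalar in characteristic $p$, and finish by iterating $(ad\,\tilde x)^p=ad\,(\widetilde{x^p})$. The only point you gloss over (as does the paper) is the degenerate case where $x^{p^a}$, and hence $x^m$, lies in every $D_i$ (e.g.\ $x^m=1$), which is covered either by the usual convention $\tilde u=0$ for such elements or directly by the second assertion of Lemma \ref{lazard-ad}.
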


The following result was established by Wilson and Zelmanov in \cite{wize}.

\begin{lem}\label{identity} Let $G$ be a group satisfying an identity. Then for 
each prime number $p$ the Lie algebra $L_p(G)$ is PI. 
\end{lem}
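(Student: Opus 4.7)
The plan is to derive, from the group identity satisfied by $G$, a nonzero Lie polynomial identity for $L_p(G)$. The main tool is the Magnus embedding of the free group into a ring of non-commutative power series, which converts group identities into identities in an associative algebra and, via the associated graded, into Lie identities on $L_p(G)$.

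Concretely, suppose $G$ satisfies the identity $w(x_1,\dots,x_n)\equiv 1$, where $w$ is a nontrivial element of the free group $F=F(x_1,\dots,x_n)$. I would use the Magnus embedding
\[
F \hookrightarrow \mathbb{F}_p\langle\langle X_1,\dots,X_n \rangle\rangle, \qquad x_i \mapsto 1+X_i.
\]
Under this map the Zassenhaus--Jennings--Lazard filtration of $F$ corresponds exactly to the filtration by powers of the augmentation ideal $I=(X_1,\dots,X_n)$; the associated graded is the free associative $\mathbb{F}_p$-algebra, and $L_p(F)$ embeds as the free restricted Lie subalgebra generated by the classes of the $X_i$. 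Let $k$ be the largest integer with $w\in D_k(F)$. Then the Magnus image of $w$ has the form $1+f+(\text{terms of degree}>k)$ with $f\neq 0$ homogeneous of degree $k$, and $f$ can be identified with a nonzero (restricted) Lie polynomial in the $X_i$.

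Next, I would show that $f$ is a Lie identity for $L_p(G)$. For a tuple $g_1,\dots,g_n\in G$, substitute via the natural map $\mathbb{F}_pF\to \mathbb{F}_pG$; the hypothesis $w(g_1,\dots,g_n)=1$ then forces $f(g_1-1,\dots,g_n-1)$ to lie in a strictly higher power of the augmentation ideal of $\mathbb{F}_pG$ than the one in which it a priori lives. Reading this off in the graded algebra yields $f(\tilde g_1,\dots,\tilde g_n)=0$ inside $L_p(G)$. Since $L_p(G)$ is generated as a restricted Lie algebra by $D_1/D_2$, a standard polarization/multilinearization extracts from $f$ a nonzero multilinear Lie identity valid on all of $L_p(G)$, which is exactly what is needed for the PI conclusion.

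The main obstacles lie in two places. First, identifying the leading Magnus term $f$ as a \emph{Lie} polynomial rather than merely an associative one: this is the classical identification of the primitive elements of the completed group algebra with the free restricted Lie algebra, and needs some care in characteristic $p$ because of the interaction with $p$-th powers in the Zassenhaus filtration. Second, the passage from a homogeneous identity holding on substitutions of group elements $\tilde g_i$ to a multilinear identity valid for arbitrary Lie elements of $L_p(G)$ has to be done carefully so as to guarantee that polarization does not kill the polynomial; this is the most delicate step, but can be handled by a van der Waerden-type argument ensuring that at least one multilinear component of $f$ survives.
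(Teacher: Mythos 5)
The paper does not actually prove this lemma: it is quoted from Wilson and Zelmanov \cite{wize}, so your proposal is in effect an attempt to reprove that theorem, and as it stands it has a genuine gap at the decisive step. The Magnus/Quillen--Jennings part is fine: $w-1$ has a nonzero leading term $f$, homogeneous of degree $k$, lying in the free restricted Lie algebra, and the graded homomorphism of group algebras induced by $x_i\mapsto g_i$ does show that $f$ vanishes under substitutions from $D_1/D_2$ (every element of $D_1/D_2$ being of the form $\tilde g$). But that is all it shows. If some $g_i$ lies in $D_{d_i}$ with $d_i>1$, the leading-term comparison collapses: the congruence $w(g_1,\dots,g_n)-1\equiv f(g_1-1,\dots,g_n-1)\pmod{I^{k+1}}$ is then vacuous, since $f(g_1-1,\dots,g_n-1)$ already lies in $I^{\sum k_id_i}\subseteq I^{k+1}$. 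Equivalently, in the graded algebra the terms of the expansion of degree $>k$ can land in the very same weight $\sum k_id_i$ as the evaluated leading term (for instance a monomial of degree $4$ in $X_1$ alone has the same weight as $X_1X_2$ when $d_1=1$, $d_2=3$), so one cannot conclude $f(\tilde g_1,\dots,\tilde g_n)=0$ for homogeneous elements of higher weight.

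Because of this, the final polarization step cannot rescue the argument: what you obtain is a (multilinearized) polynomial vanishing under substitutions from the subspace $D_1/D_2$ only, and multilinearity lets you extend vanishing from a \emph{spanning} set of the algebra, not from a generating set --- a multilinear identity satisfied on algebra generators need not hold on the subalgebra they generate, so ``$L_p(G)$ is generated by $D_1/D_2$'' does not finish the proof. What one actually needs is a multilinear Lie polynomial vanishing under substitutions of homogeneous elements $\tilde g$ of \emph{arbitrary} weights, since those span $L(G)\supseteq L_p(G)$; producing such a polynomial from the group law is precisely the nontrivial content of the Wilson--Zelmanov theorem, where the linearization is performed at the level of the group ring (on consequences of the law) before passing to the graded algebra, not afterwards. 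A smaller point in the same direction: your leading term $f$ is a priori a restricted Lie polynomial, possibly involving $p$-th power operations, so even where it vanishes it gives only a restricted identity, and additional care is needed to extract the ordinary PI asserted in the lemma. As written, the proposal establishes only that $f$ vanishes on $D_1/D_2$, which does not yield the statement.
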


\section{Proof of Theorem A}

Recall that a group is locally graded if every nontrivial finitely generated subgroup has a proper subgroup of finite index. Interesting classes of groups (e.g., locally finite groups, locally nilpotent groups, residually finite groups) are locally graded (see \cite{LMS, M} for more details).     

It is easy to see that a quotient of a locally graded group need not be locally graded (see for instance \cite[6.19]{Rob}). However, the next result gives a sufficient condition for a quotient to be locally graded \cite{LMS}.

\begin{lem} \label{HP}
Let $G$ be a locally graded group and $N$ a normal locally nilpotent subgroup of $G$. Then $G/N$ is locally graded.
\end{lem}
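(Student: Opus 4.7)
The strategy is to lift: let $\bar H = H/N$ be a nontrivial finitely generated subgroup of $G/N$, and choose $K\leq G$ finitely generated with $KN=H$, so $\bar H \cong K/(K\cap N)$ and $K\not\leq N$. The task is to produce a proper subgroup of finite index in $K/(K\cap N)$.

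Applying local gradedness of $G$ to the finitely generated subgroup $K$, we get a proper finite-index normal subgroup $M\lhd K$. Consider $S=M(K\cap N)$; since $S\supseteq M$, it has finite index in $K$. If $S\neq K$, then $S/(K\cap N)$ is the desired proper finite-index subgroup of $\bar H$, and we are done. The delicate case is $M(K\cap N)=K$. Here $K/M\cong (K\cap N)/(M\cap N)$ is a finite quotient of the locally nilpotent group $K\cap N$, hence finite nilpotent; by lifting coset representatives one obtains a finitely generated subgroup $T\leq K\cap N$ which is nilpotent (a finitely generated subgroup of the locally nilpotent $N$) with $K=MT$.

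For a contradiction, suppose this delicate case occurs for every proper finite-index normal subgroup $M$ of $K$; then every finite quotient of $K$ is nilpotent. Specializing to $M\supseteq K'$ (pullbacks of finite-index subgroups of the finitely generated abelian group $K/K'$), the assumption $M(K\cap N)=K$ forces the image of $K\cap N$ in $K/K'$ to be profinitely dense; since finitely generated subgroups of finitely generated abelian groups are profinitely closed, this image is all of $K/K'$, so $K=K'(K\cap N)$ and $\bar H$ is perfect. The commutator identity $[K'(K\cap N),K'(K\cap N)]\leq K''\cdot(K'\cap N)$ (valid because both factors are normal in $K$) then gives $K^{(i)}=K^{(i+1)}(K^{(i)}\cap N)$ for every $i$; a downward induction shows that if $K$ were solvable, then $K\leq N$, contradicting $K\not\leq N$. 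Hence $K$ must be non-solvable while all its finite quotients are nilpotent (hence solvable).

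The main obstacle is closing the contradiction from this last configuration. The plan is to exploit the finitely generated nilpotent lift $T\leq K\cap N$ together with a Frattini-type argument inside $K/R$ (where $R$ is the finite residual of $K$, so that $K/R$ is residually nilpotent) to force a proper finite-index subgroup of $K$ actually containing $K\cap N$, contradicting the standing hypothesis that no such subgroup exists; executing this final step rigorously is the crux, and is where the argument relies most essentially on both the local nilpotency of $N$ and the local gradedness of the ambient $G$.
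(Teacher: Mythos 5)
Your reductions are fine as far as they go: lifting to a finitely generated $K$ with $K\not\leq N$, passing to a proper normal finite-index $M\lhd K$, disposing of the case $M(K\cap N)\neq K$, observing that otherwise every finite quotient of $K$ is nilpotent, deducing $K=K'(K\cap N)$, iterating to $K^{(i)}=K^{(i+1)}(K^{(i)}\cap N)$, and concluding that $K$ cannot be soluble. But the argument stops exactly where the lemma actually lives, and you say so yourself: the contradiction in the terminal configuration is only a ``plan''. Worse, that configuration is not contradictory on the strength of the properties you have extracted. A finitely generated, locally graded, non-soluble group all of whose finite quotients are nilpotent exists --- Grigorchuk's group is residually a finite $2$-group, hence locally graded, all its finite images are $2$-groups, and it is not soluble. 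So no Frattini-type or finite-residual argument can close the proof from ``non-soluble plus all finite quotients nilpotent'' alone; any correct completion must re-use, in an essential way, both the local gradedness of subgroups of $K$ other than $K$ itself and the fact that $K\cap N$ is locally nilpotent and profinitely dense in $K$ (for instance, that one can write $K=MF$ with $F\leq K\cap N$ finitely generated, hence nilpotent, and then run a lower-central-series or induction-on-class argument with $F$). That unexecuted step is the substance of the statement.

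For comparison: the paper does not prove this lemma at all; it quotes it from Longobardi, Maj and Smith \cite{LMS}, where the full argument is carried out. So what you have is a correct sequence of preliminary normalizations together with an acknowledged gap at the decisive point, not a proof; to make it complete you would essentially have to reproduce the content of \cite{LMS} rather than appeal to the finite-quotient information you have isolated.
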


In \cite{zelm}, Zelmanov has shown that if $G$ is a residually finite $p$-group which satisfies a nontrivial identity, then $G$ is locally finite. Next, we extend this result to the class of locally graded groups.

\begin{lem} \label{lem.graded} Let $p$ be a prime. Let $G$ be a locally graded $p$-group which satisfies an identity. Then $G$ is locally finite.
\end{lem}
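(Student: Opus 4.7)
The plan is to reduce the statement to the theorem of Zelmanov already cited in the excerpt: every residually finite $p$-group satisfying a nontrivial identity is locally finite. Let $H$ be a nontrivial finitely generated subgroup of $G$; the goal is to show that $H$ is finite. Let $R$ denote the finite residual of $H$, that is, the intersection of all subgroups of finite index in $H$. Since $R$ is characteristic in $H$, it is in particular normal, and the quotient $H/R$ is a finitely generated residually finite $p$-group which inherits the identity satisfied by $G$. Zelmanov's theorem then yields that $H/R$ is locally finite, and being finitely generated it is finite.

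It thus remains to show that $R=1$. Suppose, for a contradiction, that $R\neq 1$. Because $R$ has finite index in $H$, it is itself finitely generated. Since $G$ is locally graded, the nontrivial finitely generated subgroup $R$ admits a proper subgroup $M$ of finite index. Form the normal core $M_H=\bigcap_{h\in H}h^{-1}Mh$. As $R$ is normal in $H$, each conjugate $h^{-1}Mh$ is contained in $R$ and has the same index in $R$ as $M$ does; moreover there are only finitely many such conjugates, because in any group a subgroup of finite index has only finitely many conjugates. Therefore $M_H$, being an intersection of finitely many subgroups of finite index in $H$, has finite index in $H$ as well. By the defining property of the finite residual we conclude $R\le M_H$, while on the other hand $M_H\le M\subsetneq R$, which is absurd.

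The main technical point is the bookkeeping in the second paragraph: one needs that the finite-index subgroup $M$ of $R$ produces a finite-index subgroup $M_H$ of the whole group $H$, and this uses both that $R$ is normal in $H$ (so all conjugates lie in $R$) and that $R$ has finite index in $H$ (so there are only finitely many conjugates). Once this is established, the contradiction with the definition of the finite residual is immediate, and the whole argument reduces to an application of Zelmanov's theorem on residually finite $p$-groups satisfying an identity.
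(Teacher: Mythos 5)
Your proof is correct and follows essentially the same route as the paper: pass to the finite residual $R$ of a finitely generated subgroup $H$, apply Zelmanov's theorem to the residually finite quotient $H/R$, and use local gradedness of $R$ to contradict the definition of $R$ when $R\neq 1$. Your normal-core bookkeeping is fine but not even needed: since $M$ already has finite index in $H$, the defining property of the finite residual gives $R\le M\subsetneq R$ directly.
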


\begin{proof}
Choose arbitrarily a finitely generated subgroup $H$ of $G$. Let $R$ be the finite residual of $H$, i.e., the intersection of all subgroups of finite index in $H$. If $R=1$, then $H$ is a finitely generated residually finite group. By Zelmanov's result \cite[Theorem 4]{zelm}, $H$ is finite. So it suffices to show that $H$ is residually finite. We argue by contradiction and suppose that $R \neq 1$. By the above argument, $H/R$ is finite and thus $R$ is finitely generated. As $R$ is locally graded we have that $R$ contains a proper subgroup of finite index in $H$, which gives a contradiction. Since $H$ be chosen arbitrarily, we now conclude that $G$ is locally finite, as well. The proof is complete. 
\end{proof}

We denote by $\mathcal{N}$ the class of all finite nilpotent groups. The following result is a straightforward corollary of \cite[Lemma 2.1]{w} (see  \cite[Lemma 3.5]{P-2000} for details).

\begin{lem}\label{Wilson}
Let $G$ be a finitely generated residually-$\mathcal{N}$ group. For each prime $p$, let $R_p$ be the intersection of all normal subgroups of $G$ of finite $p$-power index. If $G/R_p$ is nilpotent for each prime $p$, then $G$ is nilpotent.
\end{lem}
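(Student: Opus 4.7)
The plan is to produce a single integer $c$ such that $\gamma_{c+1}(G) \subseteq R_p$ for every prime $p$, and separately to verify that $\bigcap_p R_p = 1$; combining these two facts gives $\gamma_{c+1}(G) = 1$, so $G$ is nilpotent of class at most $c$.

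The intersection claim is essentially formal. Since $G$ is residually-$\mathcal{N}$, the intersection of the normal subgroups $N \lhd G$ with $G/N$ finite nilpotent is trivial. Given such an $N$, decompose the finite nilpotent quotient $G/N$ as the direct product of its Sylow subgroups $P_q$, and let $M_q$ denote the kernel of the composition $G \twoheadrightarrow G/N \twoheadrightarrow P_q$. Each $M_q$ is a normal subgroup of $G$ of finite $q$-power index, so $R_q \subseteq M_q$, and only finitely many primes $q$ actually occur, giving $N = \bigcap_q M_q$. Hence $\bigcap_p R_p \subseteq N$ for every such $N$, and intersecting over all such $N$ yields $\bigcap_p R_p = 1$.

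For the uniform class bound, let $c_p$ denote the nilpotency class of the finitely generated nilpotent group $G/R_p$, so that $\gamma_{c_p+1}(G) \subseteq R_p$. Writing $G = \langle x_1, \dots, x_d \rangle$, each lower central factor $\gamma_i(G/R_p)/\gamma_{i+1}(G/R_p)$ is a quotient of the fixed finitely generated abelian group $\gamma_i(G)/\gamma_{i+1}(G)$; moreover the abelianization $(G/R_p)^{\mathrm{ab}}$, being the largest residually-$p$ quotient of $G^{\mathrm{ab}} = \mathbb{Z}^r \oplus T$, stabilizes to $\mathbb{Z}^r$ for all but finitely many $p$. Tracking the torsion-free ranks of these successive factors across the primes, together with the fact that each pro-$p$ completion $\widehat{G}_p$ is a $d$-generator nilpotent pro-$p$ group, should yield a uniform bound $\sup_p c_p \leq c$; then $\gamma_{c+1}(G) \subseteq \bigcap_p R_p = 1$.

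The main obstacle is exactly this uniform bound on the classes $c_p$. The hypothesis supplies only an individual, a priori unspecified $c_p$ for each prime $p$, and the naive embedding $G \hookrightarrow \prod_p G/R_p$ places $G$ inside an infinite product of nilpotent groups whose classes could in principle be unbounded. Converting the finite generation of $G$ and the residually-$\mathcal{N}$ structure into a single integer $c$ that dominates every $c_p$ is the substantive step; once it is in hand, the conclusion $\gamma_{c+1}(G) = 1$ follows immediately from the first part.
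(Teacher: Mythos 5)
Your first step is fine: the reduction of a finite nilpotent quotient to its Sylow components does show that $R_q$ is contained in the kernel $M_q$ for each prime $q$ and hence that $\bigcap_p R_p=1$ for a residually-$\mathcal{N}$ group. But this is the routine half. The lemma's entire content is the second half, and there you have not given a proof: you explicitly defer "the substantive step", namely producing one integer $c$ with $c_p\leq c$ for all $p$. Note that such a uniform bound is essentially equivalent to the lemma itself (if $G$ is nilpotent of class $c$ then trivially every $c_p\leq c$), so what you have done is reduce the statement to an equivalent statement and prove only the formal part. Moreover, the heuristic you offer for closing the gap does not point at the real difficulty. Tracking torsion-free ranks of the factors $\gamma_i(G)/\gamma_{i+1}(G)$, or observing that $(G/R_p)^{\mathrm{ab}}$ stabilizes for large $p$, cannot by itself bound $c_p$: the class of $G/R_p$ can be driven entirely by its torsion subgroup, which is a finite $p$-group, and $d$-generator finite $p$-groups have unbounded nilpotency class. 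Likewise, knowing that each pro-$p$ completion is a $d$-generator nilpotent pro-$p$ group gives no uniform information. What is missing is a mechanism by which data at one prime constrains the quotients at all other primes; in the known proofs (Wilson's Lemma 2.1, and Lemma 3.5 of Shumyatsky's survey, which is exactly what the paper cites in place of a proof) this comes from a genuine theorem, e.g.\ Gruenberg's result that a finitely generated torsion-free nilpotent group is residually a finite $q$-group for every prime $q$: fixing one prime $p$ with $G/R_p$ of class $c$, this forces $\gamma_{c+1}(G)$ into the finite torsion part of $G/R_q$ for every $q$, which is the kind of cross-prime control your sketch never produces.

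So, as written, the proposal is incomplete: the easy intersection argument is correct, but the uniform class bound --- the actual theorem --- is asserted with "should yield" rather than proved, and the suggested rank-counting route would not succeed without an additional input of the Gruenberg type. For comparison, the paper itself does not reprove the lemma at all; it quotes it as a known consequence of Wilson's Lemma 2.1, with details in the cited survey.
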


We are now in a position to prove Theorem A. 

\begin{proof}[Proof of Theorem A] 
Recall that $G$ is a residually finite group satisfying an identity in which for every $x \in G$ there exists a $q$-power $m=m(x)$ such that the element $x^m$ is a bounded Engel element. We need to prove that every finitely generated subgroup of $G$ is virtually nilpotent

Firstly, we prove that all bounded Engel elements (in $G$) are contained in the Hirsch-Plotkin radical of $G$. Let $H$ be a subgroup generated by finitely many bounded Engel elements in $G$, say $H = \langle h_1, \ldots, h_t \rangle$, where $h_i$ is a bounded Engel element in $G$ for every $i=1,\ldots,t$. Since finite groups generated by Engel elements are nilpotent \cite[12.3.7]{Rob}, we can conclude that $H$ is residually-$\mathcal{N}$. As a consequence of Lemma \ref{Wilson}, we can assume that $G$ is residually-(finite $p$-group) for some prime $p$. Let $L=L_p(H)$ be the Lie algebra associated with the Zassenhaus-Jennings-Lazard series $$H=D_1\geq D_2\geq \cdots$$ of $H$. Then $L$ is generated by $\tilde{h}_i=h_i D_2$, $i=1,2,\dots,t$. Let $\tilde{h}$ be any Lie-commutator in $\tilde{h}_i$ and $h$ be the group-commutator in $h_i$ having the same system of brackets as $\tilde{h}$. Since for any group commutator $h$ in $h_1\dots,h_t$ there is a $q$-power $m=m(h)$ and a positive integer $n=n(h)$ such that $h^m$ is $n$-Engel, Corollary \ref{lemma-lazard} shows that any Lie commutator in $\tilde h_1\dots,\tilde h_t$ is ad-nilpotent. On the other hand, $H$ satisfies an identity and therefore, by Lemma \ref{identity}, $L$ satisfies some non-trivial polynomial identity. According to Theorem \ref{1} $L$ is nilpotent. Let $\hat{H}$ denote the pro-$p$ completion of $H$. Then $L_p(\hat{H})=L$ is nilpotent and $\hat{H}$ is a $p$-adic analytic group by Theorem \ref{3}. By  \cite[1.(n) and 1.(o) in Interlude A]{GA}), $\hat{H}$ is linear, and so therefore is $H$. Clearly $H$ cannot have a free subgroup of rank 2 and so, by Tits' Alternative \cite{tits}, $H$ is virtually soluble. By \cite[12.3.7]{Rob}, $H$ is soluble. Since $h_1,\dots,h_t$ have been chosen arbitrarily, we now conclude that all bounded Engel elements are in the Hirsch-Plotkin radical of $G$. 

Let $H$ be a finitely generated subgroup of $G$, and $K$ be the subgroup generated by all bounded Engel elements (in $G$) contained in $H$. Now, we need to prove that $K$ is a nilpotent subgroup of finite index in $H$. By the previous paragraph, $K$ is locally nilpotent. By Lemma \ref{HP}, $H/K$ is a locally graded $q$-group. Since $G$ satisfies a nontrivial identity, by Lemma \ref{lem.graded}, $H/K$ is finite and so, $K$ is finitely generated. From this we deduce that $K$ is nilpotent. The proof is complete. 
\end{proof}

\section{Proof of Theorem B}

Combining the positive solution of the RBP with the result \cite[Theorem C]{BSTT} one can show that if $u$ is a non-commutator word and $G$ is a finitely generated residually finite group in which all $u$-values are $n$-Engel, then the verbal subgroup $u(G)$ is nilpotent. This section is devoted to obtain a quantitative version of the aforementioned result.   

The proof of Theorem B require the following lemmas. 

\begin{lem} \label{prop.power}
Let $d,m,n$ positive integers. Let $G$ be a $d$-generator residually finite group in which $x^m$ is $n$-Engel for every $x\in G $. Then the subgroup $G^m$ has $ \{d,m,n\} $-bounded nilpotency class. 
\end{lem}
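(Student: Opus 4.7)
The plan is to adapt the Lie-algebra argument of Theorem A to a quantitative setting and then translate the resulting Lie-algebra bound into a group-theoretic class bound via Lazard's criterion. Applying Zelmanov's solution of the Restricted Burnside Problem to the finite quotients $(G/N)/(G/N)^m$ (each a $d$-generator finite group of exponent dividing $m$), one obtains $[G:G^mN]\leq f(d,m)$ for every finite-index normal $N\trianglelefteq G$; since $G^m$ is closed in the profinite topology of $G$, this gives $[G:G^m]\leq f(d,m)$, and Reidemeister-Schreier provides a generating set of $d^{*}=d^{*}(d,m)$ elements of $G^m$, which I take to be $m$-th powers $a_i=b_i^m$, each therefore $n$-Engel. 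By the Hirsch-Plotkin argument used in the proof of Theorem A, all bounded Engel elements of $G$ lie in its Hirsch-Plotkin radical, so $G^m=\langle a_1,\ldots,a_{d^{*}}\rangle$ is finitely generated and locally nilpotent, hence nilpotent.

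For each prime $p$, I analyse the Lie algebra $L_p(G^m)$ attached to the Zassenhaus-Jennings-Lazard series. By Lemma \ref{identity}, $L_p(G^m)$ is PI (inheriting the law $[y,_n x^m]=1$). The generators $\tilde a_i$ are ad-nilpotent of index at most $n$. For any Lie-commutator $\tilde h$ in the $\tilde a_i$, the group-commutator $h$ lies in $G^m\subseteq G$, so $h^m$ is $n$-Engel; writing $m=p^am'$ with $\gcd(m',p)=1$, the relation $\widetilde{h^{p^am'}}=m'\widetilde{h^{p^a}}$ in the $\mathbb{F}_p$-space $D_i/D_{i+1}$ combined with Lemma \ref{lazard-ad} shows that $\tilde h$ is ad-nilpotent of index at most $mn$, a bound independent of $p$. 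The quantitative form of Zelmanov's Theorem \ref{1} then yields that $L_p(G^m)$ is nilpotent of class at most $c=c(d,m,n)$.

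By Theorem \ref{3}, the pro-$p$ completion $\widehat{G^m}_p$ is $p$-adic analytic of dimension at most $D=D(d,m,n)$, hence embeds as a closed subgroup of $\mathrm{GL}_D(\mathbb{Z}_p)$. Since $G^m$ is nilpotent, so is $\widehat{G^m}_p$, and a nilpotent $p$-adic analytic group of dimension $\leq D$ has nilpotency class bounded by a function of $D$ (each non-trivial factor of the lower central series contributing to the dimension of the Lie algebra). Therefore $\gamma_{D'+1}(G^m)\leq R_p(G^m)$ for every prime $p$, where $D'$ is this dimension-controlled bound and $R_p(G^m)$ denotes the $p$-residual of $G^m$. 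As $G^m$ is finitely generated nilpotent, it is residually-(finite $p$-group) for every prime $p$, so $\bigcap_p R_p(G^m)=1$, and hence $\gamma_{D'+1}(G^m)=1$; this gives $G^m$ a nilpotency class which is $\{d,m,n\}$-bounded, as required. The main obstacle is bounding the class of the nilpotent pro-$p$ group $\widehat{G^m}_p$ inside $\mathrm{GL}_D(\mathbb{Z}_p)$ by a function of $D$; this cannot be read off the class of $L_p(G^m)$ directly (in general the two classes may differ), and instead one must invoke the Lazard/Lie-group correspondence to pass to a nilpotent Lie subalgebra of $\mathfrak{gl}_D(\mathbb{Q}_p)$ whose class is controlled by its dimension.
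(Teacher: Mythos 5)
The decisive gap is your last step. Nilpotency of $L_p(G^m)$ of bounded class, or $p$-adic analyticity of $\widehat{G^m}_p$ of bounded dimension, does not bound the nilpotency class of $G^m$'s finite $p$-quotients: finite $p$-groups are $p$-adic analytic of dimension $0$, with trivial $\mathbb{Q}_p$-Lie algebra, and have unbounded nilpotency class. More generally, the Lazard correspondence only controls an open (uniform) subgroup, and the class of a pro-$p$ group is not bounded in terms of the class of an open subgroup; your proposed remedy --- passing to a nilpotent subalgebra of $\mathfrak{gl}_D(\mathbb{Q}_p)$ whose class is at most its dimension --- sees the group only up to commensurability and therefore cannot close this gap, as you yourself half-acknowledge. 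What is missing is a genuinely group-theoretic ingredient converting ``generated by boundedly many $n$-Engel elements, together with bounded solubility/index data'' into a class bound (a quantitative Gruenberg-type statement, cf.\ Lemma \ref{gru}). This is exactly how the paper finishes: after showing $H=G^m$ is nilpotent and $\{d,m\}$-boundedly generated, it embeds $H$ into the Cartesian product of its finite quotients $H/N_i=(G/N_i)^m$, uses Nikolov--Segal \cite{NS} to generate each such finite quotient by a $\{d,m\}$-bounded number of $m$-th powers (hence $n$-Engel elements), and then invokes \cite[Lemma 2.2]{STT-var} to obtain one class bound $c=c(d,m,n)$ valid for all the $H/N_i$ simultaneously; the Lie-theoretic work you try to redo is essentially what sits inside that finite-group lemma, where the Engel condition on the bounded generating set (not merely Lie-algebra nilpotency) is what bounds the class.

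Two earlier steps are also unjustified. First, from $[G:G^mN]\leq f(d,m)$ for all finite-index normal $N$ you conclude $[G:G^m]\leq f(d,m)$ by asserting that $G^m$ is closed in the profinite topology of $G$; closedness of a verbal subgroup of an abstract finitely generated residually finite group is not available at that point and is not a formal consequence of Nikolov--Segal (which concerns finite and profinite groups). The paper instead derives finiteness and bounded order of $G/G^m$ from Lemma \ref{HP} together with the fact that locally graded groups of finite exponent are locally finite, after first establishing that $G^m$ is locally nilpotent via \cite[Theorem C]{BSTT}. Second, Reidemeister--Schreier does give $G^m$ a $\{d,m\}$-bounded generating set, but there is no reason these generators can be taken to be $m$-th powers $b_i^m$; bounded generation by $m$-th powers is precisely what \cite[Theorem 1]{NS} provides, and only in the finite quotients. (For your Lie-algebra computation this particular point is harmless, since Corollary \ref{lemma-lazard} applies to arbitrary elements of $G$, but it does matter if one wants Engel generators for the Gruenberg-type conclusion that the argument actually needs.)
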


\begin{proof}
Let $H=G^m$. By \cite[Theorem C]{BSTT}, $H$ is locally nilpotent. Moreover, Lemma \ref{HP} ensures us that the quotient group $G/H$ is locally graded. By Zelmanov's solution of the RBP, locally graded groups of finite exponent are locally finite (see for example \cite[Theorem 1]{M}), and so $G/H$ is finite of $\{d,m\}$-bounded order. We can deduce from \cite[Theorem 6.1.8(ii)]{Rob} that $H$ has $\{d,m\}$-boundedly many generators. In particular, $H$ is nilpotent. In order to complete the proof, we need to show that $H$ has $\{d,m,n\}$-bounded class yet. 

Note that there exists a family of normal and finite index subgroups  $\{N_i\}_{i\in \mathcal{I}}$ in $G$ which are all contained in $H$ such that $H$ is isomorphic to a subgroup of the Cartesian product of the finite quotients $H/N_i$. We show that all quotients have $\{d,m,n\}$-bounded class. Indeed, we have $H/N_i=(G/N_i)^m$. Note that $H$ is $\{d,m\}$-boundedly generated. Thus, by \cite[Theorem 1]{NS}, $H/N_i$ is $\{d,m\}$-boundedly generated where any generator is an $m$-th power which is an $n$-Engel element. By \cite[Lemma 2.2]{STT-var}, there exists a number $c$ depending only on $\{d,m,n\}$ such that each factor $H/N_i$ has nilpotency class at most $c$. So $H$ is of nipotency class at most $c$, as well. The proof is complete. 
\end{proof}

A well known theorem of Gruenberg says that a soluble group generated by finitely many Engel elements is nilpotent (see \cite[12.3.3]{Rob}). We will require a quantitative version of this theorem whose proof can be found in \cite[Lemma 4.1]{shusa}.
\begin{lem}\label{gru}
Let $G$ be a group generated by $m$ elements which are $n$-Engel and suppose that $G$ is soluble with derived length $d$. Then $G$ is nilpotent of $\{d,m,n\}$-bounded class. 
\end{lem}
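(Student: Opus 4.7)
The plan is to prove Lemma~\ref{gru} by induction on the derived length $d$; the base case $d=1$ is immediate since $G$ is then abelian, hence nilpotent of class~$1$.

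For the inductive step, assume $d\ge 2$ and that the statement holds for derived length strictly less than $d$. Let $A=G^{(d-1)}$ be the last nontrivial term of the derived series of $G$; it is an abelian normal subgroup. The quotient $G/A$ is generated by the images $\bar g_1,\ldots,\bar g_m$ of the original generators, each of which remains $n$-Engel (the property passes to quotients), and $G/A$ has derived length at most $d-1$. By the induction hypothesis, $G/A$ is nilpotent of $\{d,m,n\}$-bounded class $c_1$, so $\gamma_{c_1+1}(G)\subseteq A$. It therefore suffices to exhibit a $\{d,m,n\}$-bounded integer $K$ with $[A,{}_K G]=1$, which will yield $\gamma_{c_1+K+1}(G)=1$ and hence a $\{d,m,n\}$-bound on the class of $G$.

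To produce such a $K$, view $A$ additively as a right module over $R=\mathbb{Z}[G/A]$, where the action is conjugation. Writing $\omega$ for the augmentation ideal of $R$, one has the standard identification $A\cdot\omega^k=[A,{}_k G]$, so the goal becomes $A\cdot\omega^K=0$. The hypothesis that each $g_i$ is $n$-Engel translates to $A\cdot(\bar g_i-1)^n=0$, while $\omega$ is generated as a one-sided ideal of $R$ by $\bar g_1-1,\ldots,\bar g_m-1$. The lemma thus reduces to the following purely module-theoretic claim: if $\bar G$ is a nilpotent group of class $c$ generated by $m$ elements $\bar g_1,\ldots,\bar g_m$ and $M$ is a right $\mathbb{Z}[\bar G]$-module with $M\cdot(\bar g_i-1)^n=0$ for every $i$, then $M\cdot\omega^K=0$ for some $K=K(c,m,n)$.

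The main obstacle is this last claim, whose difficulty lies in the non-commutativity of $R$ when $c>1$. In the abelian case $c=1$ the factors $\bar g_i-1$ commute and a pigeonhole argument gives $K=m(n-1)+1$. For general $c$ one argues by a second induction on $c$, using the ring identity $(\bar g_i-1)h=h(\bar g_i-1)+h\bar g_i([\bar g_i,h]-1)$ to collect the factors $\bar g_i-1$ past arbitrary elements of $R$: the correction terms that appear are supported in the ideal generated by $[\bar G,\bar G]-1$, that is, one step deeper in the lower central filtration of $\bar G$. Iterating, after at most $c$ rounds the corrections fall into $\gamma_{c+1}(\bar G)-1=0$; choosing $K$ sufficiently large in terms of $c,m,n$ so that every remaining collected basic monomial contains some $(\bar g_i-1)^n$ as a consecutive block, one obtains $M\cdot\omega^K=0$. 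This completes the inductive step on $d$ and yields the required $\{d,m,n\}$-bound on the nilpotency class of $G$.
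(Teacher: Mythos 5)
The paper does not actually prove this lemma; it is quoted from \cite[Lemma 4.1]{shusa}, so I can only assess your argument on its own terms. Your first reduction is fine and standard: induct on the derived length, pass to $A=G^{(d-1)}$, use the inductive hypothesis on $G/A$, and translate the problem into showing that the augmentation ideal $\omega$ of $\mathbb{Z}[G/A]$ acts nilpotently on $A$ with bounded index, given that $A(\bar g_i-1)^n=0$ for the generators. The identification $[A,{}_kG]=A\cdot\omega^k$ and the base case $c=1$ (commutative group ring, pigeonhole, $K=m(n-1)+1$) are both correct.

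The gap is in the module-theoretic claim, which is exactly where all the difficulty of the lemma is concentrated. Your collection identity $(\bar g_i-1)h=h(\bar g_i-1)+h\bar g_i([\bar g_i,h]-1)$ produces correction terms carrying factors $(h-1)$ with $h\in\gamma_j(\bar G)$ for $2\le j\le c$. Your hypothesis $M(\bar g_i-1)^n=0$ concerns \emph{only the generators}; you have no annihilation relation of the form $M(h-1)^N=0$ for commutators $h=[\bar g_i,\bar g_j]$, etc., and such a relation does not follow formally from the ones you have. Consequently the assertion that ``every remaining collected basic monomial contains some $(\bar g_i-1)^n$ as a consecutive block'' for $K$ large is unjustified: after collection you are left with monomials built from central factors $(z-1)$, $z\in\gamma_c(\bar G)$, which commute with everything but which nothing in your setup kills. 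Even in the smallest nonabelian case ($\bar G$ the Heisenberg group, $n=2$, $z=[\bar g_1,\bar g_2]$) one needs a genuine computation in $\mathbb{Z}\bar G$ modulo the two-sided ideal generated by $(\bar g_1-1)^2,(\bar g_2-1)^2$ to show that $(z-1)^N$ lies in that ideal; it is true there, but it is a nontrivial identity, not a consequence of pigeonhole. Note also that your reduction throws away the hypothesis that the $g_i$ are $n$-Engel on \emph{all} of $G$ (hence so are all their conjugates, which therefore also act on $A$ with $(\,\cdot\,-1)^n=0$); the classical Gruenberg-type arguments exploit precisely this larger supply of annihilating elements, and it is not clear that the bare module hypothesis you retain is sufficient. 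To repair the proof you must either prove the module claim in full (handling the lower central filtration of $\bar G$ with explicit annihilation of the $(z-1)$ factors) or restructure the induction so as to keep the full Engel condition in play, as is done in \cite[Lemma 4.1]{shusa}.
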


For the reader's convenience we restate Theorem B.

\begin{thmB}Let $d,n$ be positive integers and $w$ a non-commutator word. Assume that $G$ is a $d$-generator residually finite group in which all $w$-values are $n$-Engel. Then the verbal subgroup  $w(G)$ has $\{d,n,w\}$-bounded nilpotency class.
\end{thmB}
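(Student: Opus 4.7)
The plan is to exploit the non-commutator hypothesis to sandwich $w(G)$ between the power subgroup $G^m$ and $G$ itself, reduce to finite quotients of $G$, and then combine the Nikolov--Segal width bound with Lemma~\ref{gru}.

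Since $w$ is a non-commutator word, some variable appearing in $w$ has non-zero exponent sum; let $m=m(w)\ge 1$ be its absolute value. Specializing all other variables to $1$ shows that $x^m$ is a $w$-value, hence an $n$-Engel element, for every $x\in G$. Lemma~\ref{prop.power} then yields that $G^m$ has $\{d,n,w\}$-bounded nilpotency class $c_1$ and (by the first part of its proof) that $G/G^m$ has $\{d,w\}$-bounded order $s$. In particular $G^m\le w(G)$ and $[w(G):G^m]\le s$.

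By residual finiteness, to bound the nilpotency class of $w(G)$ by some constant $c$ it suffices to bound the nilpotency class of $w(G/N) = w(G)N/N$ uniformly as $N$ runs over the normal subgroups of finite index of $G$. So I may assume $G$ is finite. Then $w(G)$ is a finite group generated by $n$-Engel elements, and hence is nilpotent by~\cite[12.3.7]{Rob}; consequently $w(G)/G^m$ is a finite nilpotent group of order at most $s$, with derived length bounded in terms of $s$. Combined with the bound $\log_2(c_1+1)$ on the derived length of $G^m$, this yields a $\{d,n,w\}$-bounded derived length $\ell$ for $w(G)$.

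To finish, I would invoke Nikolov--Segal~\cite{NS}: the verbal subgroup of the $d$-generated finite group $G$ has $\{d,w\}$-bounded width, so $w(G)$ is generated by a $\{d,w\}$-bounded number of $w$-values, each of which is $n$-Engel. Lemma~\ref{gru}, applied to these generators together with the bound $\ell$ on the derived length, then yields that $w(G)$ is nilpotent of $\{d,n,w\}$-bounded class, as required. The step I expect to be the main obstacle is obtaining the bounded derived length of $w(G)$ in the finite case: it is precisely here that the non-commutator hypothesis (giving $G^m\le w(G)$) and the Engel condition (forcing the finite group $w(G)$ itself to be nilpotent) combine to make $w(G)/G^m$ soluble of bounded derived length, which is what unlocks Lemma~\ref{gru}.
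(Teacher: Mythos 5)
Your proposal is correct and follows essentially the same route as the paper: the exponent-sum trick to get $G^m\le w(G)$ with bounded class (Lemma~\ref{prop.power}) and bounded index, the resulting bound on the derived length of $w(G)$, reduction to finite quotients by residual finiteness, and then Nikolov--Segal bounded width/generation by $n$-Engel $w$-values fed into Lemma~\ref{gru}. The only cosmetic differences are that you pass to finite quotients at the outset rather than via the Cartesian-product embedding, and you invoke Baer's theorem for the (not actually needed) nilpotency of $w(G)$ in the finite case; the step from bounded width to boundedly many $w$-value generators is justified exactly as in the paper, via the bounded index of $w(G)$ in the $d$-generated group.
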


\begin{proof}
Let $w = w(x_1,...,x_r)$ be a non-commutator word. We may assume that the sum of the exponents of $x_1$ is $k \neq 0$. Substitute $1$ for $x_2,...,x_r$ and an arbitrary element $g \in G$ for $x_1$. We see that $g^k$ is a $w$-value for every $g \in G$. Thus every $k$-th power is $n$-Engel in $G$. Lemma \ref{prop.power} ensures that $G^k$ has $\{d,n,w,\}$-bounded nilpotency class.

Following an argument similar to that used in the proof of Lemma \ref{prop.power} we can deduce that the verbal subgroup $w(G)$ is nilpotent.  By Zelmanov's solution of the RBP, locally graded groups of finite exponent are locally finite (see for example \cite[Theorem 1]{M}), and so $G/G^k$ is finite of $\{d,w\}$-bounded order. Thus, the verbal subgroup $w(G)$ has $\{d,m,w\}$-bounded derived length. 

Note that there exists a family of normal and finite index subgroups  $\{N_i\}_{i\in \mathcal{I}}$ in $G$ that are all contained in $w(G)$ such that $w(G)$ is isomorphic to a subgroup of the Cartesian product of the finite quotients $w(G)/N_i$. We show that all quotients $w(G)/N_i$ have $\{d,n,w\}$-bounded class. Indeed, we have $w(G)/N_i=w(G/N_i)$. We also have $w(G)$ is $\{d,w\}$-boundedly generated. By \cite[Theorem 3]{NS} each quotient $w(G)/N_i$ is $\{d,w\}$-boundedly generated by $w$-values which are $n$-Engel elements. Since $w(G)$ has $\{d,m,w\}$-bounded derived length, according to Lemma \ref{gru} we can deduce that $w(G)/N_i$ has  $\{d,n,w\}$-bounded nilpotency class Thus, $w(G)$ has  $\{d,n,w\}$-bounded nilpotency class, as well. This completes the proof.
\end{proof}

\end{document}